\newcommand{\rn}[1]{\mathbb{R}^{#1}}
\newcommand{\diverg}[1]{\mbox{div}\,#1\,}
\newtheorem{thm}{Theorem}[section]
\newtheorem{lemma}{Lemma}[section]
\newtheorem{cor}[thm]{Corollary}
\theoremstyle{definition}
\newtheorem{splemma}[lemma]{Lemma}
\newtheorem*{remark}{Remark}
\begin{document}
\bibliographystyle{amsplain}

\title{The ${L^p}$ Dirichlet Problem for the Stokes System on
Lipschitz Domains}
\author{Joel Kilty\thanks{The author is supported in part by the NSF (DMS-0500257).}}
\date{}
\maketitle

\begin{abstract}
    \noindent We study the $L^p$ Dirichlet problem for the Stokes system on
    Lipschitz domains.  For any fixed $p>2$, we show that a reverse
    H\"{o}lder condition with exponent $p$ is
    sufficient for the solvability of the Dirichlet problem with
    boundary data in $L^p_N(\partial\Omega,\rn{d})$.  Then we obtain a much simpler
    condition which implies the reverse H\"{o}lder condition.  Finally, we establish the solvability of
    the $L^p$ Dirichlet problem for $d\geq 4$ and
    $2-\varepsilon<p<\frac{2(d-1)}{d-3}+\varepsilon$.

    \bigskip  \noindent \emph{MSC}(2000): 35Q30. \\
    \bigskip \noindent \emph{Keywords}: Stokes system, Lipschitz domains, Dirichlet problem
\end{abstract}

\section{Introduction}

Let $\Omega$ be a bounded Lipschitz domain in $\rn{d}$, $d\geq 4$,
with connected boundary, and let $N$ be the outward unit normal to
$\partial\Omega$.  Then, set
$$L_N^p(\partial\Omega,\rn{d})=\left\{\vec{g}\in L^p(\partial\Omega,\rn{d}):
\int_{\partial\Omega} \vec{g}\cdot N\,d\sigma =0\right\}.$$ In this
paper we are interested in studying the $L^p$ Dirichlet problem for
the Stokes system:

\begin{equation} \label{stokesSystem}
    \left\{
    \begin{array}{ll}
        \Delta \vec{u} = \nabla p & \mbox{ in } \Omega, \\
        \diverg{\vec{u}} = 0 & \mbox{ in } \Omega, \\
        \vec{u}=\vec{f}\in L^p(\partial\Omega,\rn{d}) & \mbox{ on }
        \partial\Omega, \\
        (\vec{u})^*\in L^p(\partial\Omega), &
    \end{array}
    \right.
\end{equation}

\noindent where $(\vec{u})^*$ is the non-tangential maximal function
of $\vec{u}$ and the boundary values are taken in the sense of
non-tangential convergence.  Note that using the divergence theorem
we have the following necessary condition on the boundary data
$\vec{f}$:
$$\int_{\partial\Omega} \vec{f}\cdot N\,d\sigma =
\int_{\partial\Omega} \vec{u}\cdot N\,d\sigma = \int_{\Omega}
\diverg{\vec{u}}\,dx = 0,$$ i.e. $\vec{f}\in
L^p_N(\partial\Omega,\rn{d})$. We say that the $L^p$ Dirichlet
problem (\ref{stokesSystem}) on $\Omega$ is uniquely solvable if
given any $\vec{f}\in L^p_N(\partial\Omega,\rn{d})$, there exists a
unique function $\vec{u}$ and a unique function $p$, up to
constants, satisfying (\ref{stokesSystem}) where $\vec{u}=\vec{f}$
in the sense of non-tangential convergence, i.e.
$$\lim_{\substack{x\rightarrow Q \\ x\in \Gamma(Q)}} \vec{u}(x)
= \vec{f}(Q) \mbox{ for a.e. } Q\in \partial\Omega.$$  Here
$\Gamma(Q) = \left\{ x\in \Omega:
|x-Q|<2\mbox{dist}(x,\partial\Omega)\right\}$. Moreover, the
solution $\vec{u}$ satisfies
$\|(\vec{u})^*\|_{L^p(\partial\Omega,\rn{d})}\leq
C\|\vec{f}\|_{L^p(\partial\Omega,\rn{d})}$, where $C$ is independent
of the boundary data $\vec{f}$.

Since $\Omega$ is a bounded Lipschitz domain there exists $r_0>0$ such that for each
point $P\in\partial\Omega$ there is a new coordinate system of $\rn{d}$ obtained from the
standard Euclidean coordinate system through translation and rotation so that $P=(0,0)$ and
$$B(P,r_0)\cap \Omega = B(P,r_0)\cap \left\{ (x',x_d)\in \rn{d}: x'\in \rn{d-1} \mbox{ and }
x_d > \psi(x')\right\},$$ where $\psi:\rn{d-1}\rightarrow \rn{}$ is
a Lipschitz function and $\psi(0)=0$.  Throughout the paper we let

\begin{eqnarray*}
    \Delta(Q,r) &=& B(Q,r) \cap \partial\Omega, \\
    T(Q,r) &=& B(Q,r) \cap \Omega, \\
    I_r &=& \{ (x',\psi(x'))\in \rn{d-1}\times \rn{}:
    |x_1|<r,\dots |x_{d-1}|<r\}, \\
    Z_r &=& \{ (x',x_d): |x_1|<r,\dots, |x_{d-1}|<r,
    \psi(x')<x_d<C'r\},
\end{eqnarray*}

\noindent where $Q\in \partial\Omega$, $0<r<r_0$, and
$C'=1+10\sqrt{d}\|\nabla \psi\|_{\infty}>0$ is chosen so that $Z_r$
is a star-shaped Lipschitz domain with Lipschitz constant
independent of $r$.

The main results of this paper are as follows:

\begin{thm} \label{necessarySufficientCondition}
    Let $\Omega$ be a bounded Lipschitz domain with connected boundary in $\rn{d}$, $d\geq 4$, and $p>2$.  If there
    exists $C>0$ such that for any $Q\in
    \partial\Omega$ and $0<r<r_0$, the reverse H\"{o}lder
    condition,

    \begin{equation} \label{reverseHolder}
        \left(\frac{1}{|\Delta(Q,r)|}\int_{\Delta(Q,r)}|(\vec{u})^*|^p\,d\sigma\right)^{1/p}
        \leq C
        \left(\frac{1}{|\Delta(Q,2r)|}\int_{\Delta(Q,2r)}|(\vec{u})^*|^2\,d\sigma\right)^{1/2},
    \end{equation}

    \noindent holds for any solution $\vec{u}$ of the Stokes system
    (\ref{stokesSystem}) in $\Omega$ with the properties that
    $(\vec{u})^*\in L^2(\partial\Omega)$ and
    $\vec{u}=0$ on $\Delta(Q,3r)$, then the $L^p$ Dirichlet problem for the Stokes system
    (\ref{stokesSystem}) on $\Omega$ is uniquely solvable.
\end{thm}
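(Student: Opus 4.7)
My strategy is to deduce Theorem \ref{necessarySufficientCondition} from the known $L^2$ solvability of the Stokes Dirichlet problem on Lipschitz domains (due to Fabes--Kenig--Verchota and their successors) together with the real-variable method of Z. Shen. The $L^2$ theory furnishes a solution operator satisfying $\|(\vec u)^*\|_{L^2(\partial\Omega)} \le C\|\vec f\|_{L^2(\partial\Omega)}$ for all $\vec f \in L^2_N(\partial\Omega, \rn{d})$. It therefore suffices to establish the a priori estimate $\|(\vec u)^*\|_{L^p} \le C\|\vec f\|_{L^p}$; existence will then follow by approximating $\vec f \in L^p_N$ by functions in $L^2_N \cap L^p_N$ and passing to the limit in the $L^p$ bound, and uniqueness from $L^p(\partial\Omega) \subset L^2(\partial\Omega)$ combined with $L^2$ uniqueness.

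For the a priori estimate I would invoke Shen's real-variable theorem: a sublinear operator bounded on $L^2$ is bounded on $L^p$, $p>2$, once for every surface ball its image admits a splitting into an ``$L^2$-small'' local piece and a ``good'' piece satisfying a reverse H\"older inequality on the ball. To verify its hypotheses, fix $\Delta(Q_0, r)$ with $0 < r < c r_0$ and decompose $\vec f = \vec g + \vec h$, where $\vec g$ coincides with $\vec f$ on $\Delta(Q_0, 4r)$ up to a small correction by a multiple of $N$ needed to enforce $\int_{\partial\Omega}\vec g \cdot N\, d\sigma = 0$, while $\vec h = \vec f - \vec g$ vanishes on $\Delta(Q_0, 3r)$. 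Let $\vec v, \vec w$ be the $L^2$ solutions with data $\vec g, \vec h$, so $\vec u = \vec v + \vec w$.

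The $L^2$ bound gives $\|(\vec v)^*\|_{L^2(\partial\Omega)} \le C\|\vec g\|_{L^2} \le C\bigl(\int_{\Delta(Q_0, 4r)}|\vec f|^2\, d\sigma\bigr)^{1/2}$, which supplies the localized bad part. For the good part, $\vec w = 0$ on $\Delta(Q_0, 3r)$, so the hypothesis (\ref{reverseHolder}) applied at $Q_0$ with radius $r$ yields
$$\left(\frac{1}{|\Delta(Q_0, r)|}\int_{\Delta(Q_0, r)}|(\vec w)^*|^p\, d\sigma\right)^{1/p} \le C\left(\frac{1}{|\Delta(Q_0, 2r)|}\int_{\Delta(Q_0, 2r)}|(\vec w)^*|^2\, d\sigma\right)^{1/2},$$
and the right side is dominated via the triangle inequality by the $L^2$ average of $(\vec u)^*$ on $\Delta(Q_0, 2r)$ plus a controlled contribution from $(\vec v)^*$. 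Shen's theorem then produces the global $L^p$ bound on $(\vec u)^*$.

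The main obstacle I expect is handling the correction by a multiple of $N$: to preserve $\vec g \in L^2_N$ one must subtract a term proportional to $N$, and one needs both its $L^2$ norm and its effect on $(\vec v)^*$ to be controlled by $\|\vec f\|_{L^2(\Delta(Q_0, 4r))}$ so that the bad-part estimate required by Shen's lemma is not weakened. A secondary technical point is that the pressure is determined only up to an additive constant, but since Theorem \ref{necessarySufficientCondition} concerns only $(\vec u)^*$, this has no bearing on the estimates.
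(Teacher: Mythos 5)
Your proposal follows the same route as the paper: split $\vec f$ into a piece localized near $Q_0$ plus a correction enforcing the flux condition, solve the two $L^2$ problems, bound the ``bad'' piece via the $L^2$ theory and the ``good'' piece via the assumed reverse H\"older inequality, and close the argument with Shen's real-variable theorem. The only substantive deviation is that you propose taking the flux correction to be a constant multiple of the outward normal $N$ supported off the local ball, whereas the paper uses a constant multiple of a smooth vector field $\vec\alpha$ with $\vec\alpha\cdot N\ge C_1>0$; both choices make the denominator $\int_{\partial\Omega\setminus\Delta}(\text{field})\cdot N\,d\sigma$ bounded below and hence give the same Cauchy--Schwarz control on the correcting constant and on $\|(\vec v)^*\|_{L^2}$, so your version works equally well for the Dirichlet problem. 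One thing to tighten: your radii ($\vec h=0$ on $\Delta(Q_0,3r)$, reverse H\"older applied with inner radius $r$) don't quite line up with the $2Q$ in Shen's theorem; the paper resolves this by making $\vec u_2$ vanish on $I_{8r}$ and applying the reverse H\"older on $I_{2r}$/$I_{4r}$, and also records a short remark on passing between surface balls and surface cubes. These are routine adjustments and don't affect the correctness of your plan.
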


\noindent It should be noted that nowhere in the proof of Theorem
\ref{necessarySufficientCondition} is the condition $d\geq 4$ used.
In the case $d=2,3$ the reverse H\"{o}lder Condition
(\ref{reverseHolder}) can be established for $2-\varepsilon < p <
\infty$, providing another proof of the solvability of the $L^p$
Dirichlet problem for $d=2,3$ and $2-\varepsilon<p < \infty$.

Next we establish a simpler condition which implies the reverse
H\"{o}lder condition given by estimate (\ref{reverseHolder}) using
the square function estimates as well as the regularity estimate.
This condition is given by the following theorem.

\begin{thm} \label{simplerConditionThm}
    Let $\Omega$ be a bounded Lipschitz domain with connected boundary in $\rn{d}$, $d\geq 4$.  Suppose that
    there exists a constant $C_1>0$ and $\lambda \in (0,d]$ such
    that for $0<r<R<r_0$ and $Q\in\partial\Omega$,

    \begin{equation} \label{simplerConditionEstimate}
        \int_{T(Q,r)} |\vec{u}|^2\,dx \leq C_1
        \left(\frac{r}{R}\right)^{\lambda} \int_{T(Q,R)}
        |\vec{u}|^2\,dx,
    \end{equation}

    \noindent whenever $\vec{u}$ is a solution of the Stokes system
    (\ref{stokesSystem}) in $\Omega$ with the properties that
    $(\vec{u})^*\in L^2(\partial\Omega)$ and $\vec{u}=0$ on
    $\Delta(Q,R)$.  Then, if $$2<p<2+\frac{4}{d-\lambda},$$ the $L^p$
    Dirichlet problem (\ref{stokesSystem}) is uniquely solvable.
\end{thm}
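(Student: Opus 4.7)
The plan is to use Theorem~\ref{necessarySufficientCondition} as a black box: it suffices to verify the reverse H\"older condition \eqref{reverseHolder}. So fix $Q \in \partial\Omega$, $0 < r < r_0/3$, and a solution $\vec{u}$ of \eqref{stokesSystem} with $(\vec{u})^* \in L^2(\partial\Omega)$ and $\vec{u} = 0$ on $\Delta(Q, 3r)$.

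The first step is to upgrade \eqref{simplerConditionEstimate} to a matching Morrey decay for the gradient of $\vec{u}$. For any $P \in \Delta(Q, r)$ one has $\Delta(P, 2r) \subset \Delta(Q, 3r)$, so \eqref{simplerConditionEstimate} applies with base point $P$ and outer scale $R = 2r$. Combined with the Caccioppoli inequality for the Stokes system (test against $\eta^2 \vec{u}$ for a cutoff $\eta$; the pressure term is handled by $\diverg \vec{u} = 0$ together with the vanishing boundary trace), this yields
$$\int_{T(P, s)} |\nabla \vec{u}|^2 \, dx \leq C \, s^{\lambda - 2} \, r^{-\lambda} \int_{T(Q, 3r)} |\vec{u}|^2 \, dx \qquad (P \in \Delta(Q, r),\ 0 < s < r).$$

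Second, I would invoke the equivalence of the non-tangential maximal function and the square function $S(\vec{u})(P) = \bigl(\int_{\Gamma(P)} |\nabla \vec{u}|^2 \delta^{2-d}\, dx\bigr)^{1/2}$ in $L^p(\partial\Omega)$. This equivalence is established for the Stokes system via the $L^2$ regularity estimate and the layer potential theory of Fabes--Kenig--Verchota, and it reduces the verification of \eqref{reverseHolder} to a corresponding reverse H\"older bound for $S(\vec{u})$.

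The third and crucial step is to establish that bound using the gradient Morrey decay from the first step. One decomposes the cone $\Gamma(P)$ dyadically by distance to $\partial\Omega$ and controls each piece via the Morrey estimate. A naive pointwise bound on $S(\vec{u})(P)$ yields a dyadic sum like $\sum_{k} 2^{k(d-\lambda)}$ that diverges when $\lambda < d$, so one must instead take the $L^p$ norm first, exchange orders of integration, and invoke a Sobolev--Morrey (Adams-type) inequality to control the resulting Riesz-potential integral. The exponent restriction $p < 2 + 4/(d-\lambda)$ emerges as the sharp condition for this integral to converge with the scaling demanded by \eqref{reverseHolder}. A final standard non-tangential trace computation converts $\int_{T(Q, 3r)} |\vec{u}|^2$ into $r \int_{\Delta(Q, Cr)} |(\vec{u})^*|^2\, d\sigma$, yielding \eqref{reverseHolder} after a covering argument.

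The main obstacle will be executing the third step cleanly. The interplay between the Morrey index $\lambda$, the ambient dimension $d$, and the cone geometry is delicate, and the sharp exponent $p=2+4/(d-\lambda)$ is obtained only after carefully choosing how to combine the Morrey decay, the boundary-vanishing Poincar\'e inequality, and the appropriate Sobolev/Adams embedding at each dyadic scale.
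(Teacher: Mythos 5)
Your outline captures the right overall shape (Morrey decay $\Rightarrow$ reverse H\"older for $(\vec{u})^*$, then invoke Theorem~\ref{necessarySufficientCondition}), and Step~1 is sound: Caccioppoli indeed upgrades \eqref{simplerConditionEstimate} to a Morrey estimate on $\nabla\vec{u}$. But the route you propose from there diverges from the paper's in a way that leaves a genuine gap, and you acknowledge it yourself: Step~3 is not an argument, it is a wish. ``Take the $L^p$ norm first, exchange orders, invoke a Sobolev--Morrey (Adams-type) inequality'' does not specify which inequality, how it interacts with the cone geometry of $S(\vec{u})$, or why the threshold $p = 2 + 4/(d-\lambda)$ falls out. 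Since $p/2 > 1$, a naive Minkowski exchange goes the wrong way, and it is not at all clear that a Morrey bound on $\nabla\vec{u}$ alone closes the estimate. You have correctly identified the hard step but not solved it. There is also an unaddressed localization issue in Step~2: the global $L^p$ equivalence $\|(\vec{u})^*\|_p \approx \|S(\vec{u})\|_p$ does not directly yield the local reverse H\"older estimate on $\Delta(Q,r)$ without controlling the tails of the truncated cones.

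The paper avoids both problems with a different mechanism. Instead of the first-order square function $S$, it works with $\tilde{S}(\vec{u}) = \bigl(\int_{\Gamma(Q)} |\nabla^2\vec{u}|^2 |x-Q|^{4-d}\,dx\bigr)^{1/2}$ and a \emph{pointwise} bound $|\nabla^2\vec{u}(x)| \lesssim \delta(x)^{-2}(\delta(x)/r)^{(\lambda-d)/2}\bigl(r^{-d}\int_{T(Q_0,2r)}|\vec{u}|^2\bigr)^{1/2}$, which follows from \eqref{simplerConditionEstimate}, Caccioppoli, and interior estimates. The heart of the argument is then Lemma~\ref{importantLemma}, whose proof (via Lemma~\ref{shenSquareTildeEstimate}) is a short H\"older computation rather than any Sobolev/Adams embedding: it trades $\int|\nabla^2\vec{u}|^p\delta^{2p-1-\gamma}$ for $\sup\bigl(|\nabla^2\vec{u}|^{p-2}\delta^{2p-2-\gamma}\bigr)\cdot\int|\nabla^2\vec{u}|^2\delta$, and the sup is exactly where the condition $p < 2 + 4/(d-\lambda)$ enters (it is needed so the exponent on $\delta$, namely $(\lambda-d)(p-2)/2 + 2 - \gamma$, stays nonnegative for some $\gamma>0$). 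The remaining $\int|\nabla^2\vec{u}|^2\delta$ is bounded by $\int_{\partial\Omega}|(\nabla\vec{u})^*|^2$, which is then controlled by the Fabes--Kenig--Verchota $L^2$ regularity estimate and the vanishing of $\vec{u}$ on the bottom of the boundary. Localization is handled cleanly by applying Lemma~\ref{importantLemma} on the small Lipschitz subdomain $Z_{\rho r}$ and averaging over $\rho\in(3/2,2)$, together with a near/far split $\mathcal{M}_1,\mathcal{M}_2$ of the maximal function. If you want to complete your proposal, the shortest path is to replace Step~3 with this pointwise-bound-plus-H\"older argument rather than trying to make the Adams-type embedding work.
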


Finally, we establish condition (\ref{simplerConditionEstimate})
given in Theorem \ref{simplerConditionThm} to prove the following
corollary.

\begin{cor} \label{dirichletProblem}
    Let $\Omega$ be a bounded Lipschitz domain in $\rn{d}$, $d\geq
    4$, with connected boundary.  Then there exists $\varepsilon>0$,
    depending only on the Lipschitz character of $\Omega$ such that,
    given $\vec{f}\in L^p_N(\partial\Omega,\rn{d})$ with
    $2-\varepsilon < p < \frac{2(d-1)}{d-3} + \varepsilon$, the
    Dirichlet problem for the Stokes system (\ref{stokesSystem}) has
    a unique solution, and the solution satisfies the estimate
    $\|(\vec{u})^*\|_p \leq C\|\vec{f}\|_p$.
\end{cor}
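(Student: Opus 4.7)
The plan is to verify the hypothesis of Theorem \ref{simplerConditionThm} with some $\lambda$ slightly exceeding $3$, and to handle the lower endpoint by a separate perturbation from $p=2$. The upper bound is no accident: substituting $\lambda = 3$ into $2 + \frac{4}{d-\lambda}$ returns precisely $\frac{2(d-1)}{d-3}$, so the corollary follows once (\ref{simplerConditionEstimate}) is established with $\lambda > 3$.

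For the decay estimate, fix a Stokes solution $\vec u$ with $\vec u = 0$ on $\Delta(Q,R)$ and $(\vec u)^* \in L^2(\partial\Omega)$. The tools I would combine are: (i) a Caccioppoli-type inequality for the Stokes system,
\[
\int_{T(Q,r)} |\nabla \vec u|^2\,dx \leq \frac{C}{r^2}\int_{T(Q,2r)} |\vec u|^2\,dx;
\]
(ii) a Poincar\'e inequality, which applies because $\vec u$ vanishes on $\Delta(Q, r) \subset \Delta(Q, R)$; and (iii) a Meyers/Gehring-type self-improvement producing $\nabla\vec u \in L^q_{\mathrm{loc}}$ for some $q > 2$ depending only on the Lipschitz character of $\Omega$. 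Iterating these estimates over dyadic annuli shrinking toward $Q$, and using the boundary vanishing of $\vec u$ through Sobolev embedding, one passes from these local bounds to a Campanato-type decay of $r \mapsto r^{-\lambda}\int_{T(Q, r)} |\vec u|^2\,dx$ with $\lambda > 3$.

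With (\ref{simplerConditionEstimate}) in hand, Theorem \ref{simplerConditionThm} immediately yields solvability for $2 < p < \frac{2(d-1)}{d-3} + \varepsilon'$. For $p$ in a left-neighborhood of $2$, I would instead appeal to the $L^2$ theory of Fabes-Kenig-Verchota and Brown-Shen, which establishes invertibility of the boundary integral operators for Stokes on $L^2(\partial\Omega)$, and use Sneiberg's stability theorem (or a direct interpolation between the $L^2$ bound and weak-type estimates for the layer potentials) to extend invertibility to $L^p$ for $p \in (2-\varepsilon, 2+\varepsilon)$. Together these cover the full range stated in the corollary.

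The main obstacle is calibrating the Meyers improvement to land strictly above $\lambda = 3$ rather than at an unspecified positive value: the gain must be tracked carefully through the Sobolev exponent of $\nabla\vec u$, and it is precisely the interplay between this exponent and the trace vanishing of $\vec u$ on $\Delta(Q,R)$ that forces the critical value $\lambda = 3$. This is the same mechanism responsible for the sharp threshold $\frac{2(d-1)}{d-3}$ that appears in Shen-type theorems for second-order elliptic systems on Lipschitz domains.
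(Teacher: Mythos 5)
Your high-level plan coincides with the paper's: verify hypothesis (\ref{simplerConditionEstimate}) with some $\lambda>3$ and apply Theorem \ref{simplerConditionThm}, noting that $\lambda=3$ reproduces exactly the threshold $\frac{2(d-1)}{d-3}$. Your remark about the lower endpoint $2-\varepsilon<p\le 2$ is also consistent with what is known. But the mechanism you propose for obtaining $\lambda>3$ does not deliver it, and the gap is quantitative, not cosmetic.

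The toolkit you list---Caccioppoli, Poincar\'e (via the vanishing trace), and Meyers/Gehring higher integrability---gives only $\lambda = 2 + d\bigl(1-\tfrac{2}{q}\bigr)$. Indeed, Poincar\'e in the truncated cylinder yields $\int_{Z_r}|\vec u|^2 \lesssim r^2\int_{Z_r}|\nabla\vec u|^2$; applying H\"older against the Meyers exponent $q>2$ at scale $r$, then the reverse-H\"older higher integrability and Caccioppoli at scale $R$, one lands at $(r/R)^{2+d(1-2/q)}\int_{Z_{R}}|\vec u|^2$. Since the Meyers exponent $q = 2+\varepsilon$ has $\varepsilon$ small and tied to the Lipschitz character, the term $d(1-2/q)$ is small and $\lambda$ hovers just above $2$, not above $3$. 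Iterating over dyadic annuli, as you suggest, compounds the same one-step gain and does not change the limiting exponent; routing through Sobolev embedding of $W^{1,q}_0$ into $L^{q^*}$ produces an identical $\lambda$. So the tools you name, as combined, cannot reach the required threshold.

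The ingredient you are missing is the $L^q$ \emph{boundary regularity estimate} for the Stokes system on Lipschitz domains (Fabes--Kenig--Verchota for $q=2$, extended to $q$ in a neighborhood of $2$), namely $\|(\nabla\vec u)^*\|_{L^q(\partial Z)} \le C\|\nabla_t\vec u\|_{L^q(\partial Z)}$. This is a genuine PDE theorem, not a real-variable self-improvement. The paper uses the fundamental theorem of calculus along the normal fiber plus boundary vanishing to get the pointwise bound $|\vec u(x)| \le C\,\delta(x)\,|(\nabla\vec u)^*_\rho(Q)|$ for $x=(Q,x_d)$, hence $\int_{Z_r}|\vec u|^2 \lesssim r^3\int_{I_r}|(\nabla\vec u)^*_\rho|^2\,d\sigma$ --- three powers of $r$, and crucially a $(d-1)$-dimensional boundary integral rather than a $d$-dimensional solid one. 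Then H\"older on $I_r$, the $L^q$ regularity estimate (which converts $(\nabla\vec u)^*_\rho$ into $\nabla_t\vec u$, vanishing on $\Delta(0,R)$, hence into $\nabla\vec u$ on $\Omega\cap\partial Z_{\rho R}$), integration in $\rho$ to pass to a solid integral, higher integrability, and Caccioppoli yield $\lambda = 3+(d-1)\bigl(1-\tfrac{2}{q}\bigr)$, which exceeds $3$ for \emph{every} $q>2$ no matter how close to $2$. That extra baseline unit ($3$ rather than $2$) is precisely what the regularity estimate buys, and it is indispensable for reaching $p<\frac{2(d-1)}{d-3}$.
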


For Lipschitz domains, the boundary value problems for Laplace's
equation with boundary data in $L^p$ are now well understood.  In
the late 1970's, Dahlberg \cite{dahlberg2,dahlberg1} showed that the
Dirichlet problem is uniquely solvable for $2-\varepsilon < p \leq
\infty$ where $\varepsilon$ depends on $d$ and the Lipschitz
character of $\Omega$.  Jerison and Kenig \cite{jerison} were then
able to solve the $L^2$ Neumann problem.  It should be noted that in
1984, Verchota \cite{verchota} established these same results by
extending the method of layer potentials to Lipschitz domains using
the celebrated theorem of Coifman, McIntosh, and Meyer
\cite{coifman} on the $L^2$ boundedness of Cauchy integrals on
Lipschitz curves.  Dahlberg and Kenig \cite{dahlberg} were then able
to show that the $L^p$ Neumann problem is uniquely solvable for
$1<p<2+\varepsilon$.

The method of layer potentials was also used to solve the $L^2$
Dirichlet problem for second order elliptic systems on Lipschitz
domains for $d\geq 3$ \cite{dahlberg3,fabes2,fabes,gao} as well as
the $L^2$ Dirichlet, Neumann, and regularity problems for the Stokes
system \cite{fabes}.  For $d=2,3$, Dahlberg and Kenig
\cite{dahlberg4} were able to show in 1990 that the $L^p$ Dirichlet
problem for elliptic systems is uniquely solvable for
$2-\varepsilon<p\leq\infty$ and the $L^p$ Neumann problem is
solvable for $1<p<2+\varepsilon$. Then, in 1995, Shen
\cite{shen:stokes} was able to establish the solvability of the
$L^{\infty}$ Dirichlet problem for the Stokes system when $d=3$ by
establishing certain decay estimates on the Green's functions thus
showing that when $d=3$, the Dirichlet problem is solvable for
$2-\varepsilon <p\leq\infty$.  In 2005, Shen \cite{shen:ellip} was
able to show that for $d\geq 4$ the $L^p$ Dirichlet problem for
elliptic systems was solvable for $2-\varepsilon < p <
\frac{2(d-1)}{d-3} + \varepsilon$. Also, in 2006, he established the
solvability of the $L^p$ Neumann problem for elliptic systems for
$p$ in the range $\frac{2(d-1)}{d+1}-\varepsilon<p<2+\varepsilon$
for $d\geq 4$ \cite{shen:boundary}.  The method used by Shen relies
on the solvability of the $L^2$ problem and certain reverse
H\"{o}lder inequalities. It is noted that these methods have also
been applied to the biharmonic equation \cite{dkv:biharmonic,
pv:biharmonic, shen:ellip,shen:biharmonic,verchota:biharmonic}.

In this paper, we use the following real variable argument proved by
Shen in \cite{shen:boundary} to establish a sufficient condition for
the solvability of the $L^p$ Dirichlet problem for the Stokes system
when $d\geq 4$ and $p>2$.

\begin{thm}\label{realVariableArgument}
    Let $S=\left\{ (x',\psi(x')):x'\in \rn{d-1}\right\}$ be a Lipschitz
    graph in $\rn{d}$. Let $Q_0$ be a surface cube in  $S$ and $F\in L^2(2Q_0)$.  Let
    $p>2$ and $g\in L^q(2Q_0)$ for some $2<q<p$.  Suppose that for
    each dyadic subcube $Q$ of $Q_0$ with $|Q|\leq \beta |Q_0|$,
    there exists two integrable functions $F_Q$ and $R_Q$ on $2Q$
    such that $|F|\leq |F_Q|+|R_Q|$ on $2Q$, and

    \begin{eqnarray*}
        \lefteqn{\left( \frac{1}{|2Q|} \int_{2Q} |R_Q|^p\,d\sigma
        \right)^{1/p} \leq}\\ && \qquad\qquad C_1\left\{\left(\frac{1}{|\alpha Q|} \int_{\alpha
        Q} |F|^2\,d\sigma\right)^{1/2} + \sup_{Q'\supset Q} \left(\frac{1}{|Q'|}
        \int_{Q'} |g|^2\,d\sigma\right)^{1/2}\right\}, \\
        \lefteqn{\left(\frac{1}{|2Q|}\int_{2Q} |F_Q|^2\,d\sigma\right)^{1/2}
        \leq
        C_2\sup_{Q'\supset Q} \left(\frac{1}{|Q'|} \int_{Q'}
        |g|^2\,d\sigma\right)^{1/2},}
    \end{eqnarray*}

    \noindent where $C_1,C_2>0$ and $0<\beta<1<\alpha$.  Then,

    \begin{eqnarray}
        \lefteqn{\left( \frac{1}{|Q_0|}\int_{Q_0} |F|^q\,d\sigma\right)^{1/q}
        \leq} \nonumber\\ &&  C_3\left\{\left(\frac{1}{|2Q_0|}\int_{2Q_0} |F|^2\,d\sigma\right)^{1/2} +
        \left(\frac{1}{|2Q_0|}\int_{2Q_0}
        |g|^q\,d\sigma\right)^{1/q} \right\}, \label{shenThmConclusion}
    \end{eqnarray}

    \noindent where $C_3$ depends only on $d$, $p$, $q$, $C_1$,
    $C_2$, $\alpha$, $\beta$, and $\|\nabla \psi\|_{\infty}$.
\end{thm}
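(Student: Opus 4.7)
The plan is a Calder\'{o}n--Zygmund stopping-time (good-$\lambda$) argument on the dyadic subcubes of $Q_0$. Introduce the localized dyadic $L^2$-maximal functions
\begin{equation*}
F^\#(x) = \sup_{x \in Q \subset 2Q_0}\bigl(\tfrac{1}{|Q|}\int_Q |F|^2\,d\sigma\bigr)^{1/2}, \qquad g^\#(x) = \sup_{x \in Q}\bigl(\tfrac{1}{|Q|}\int_Q |g|^2\,d\sigma\bigr)^{1/2},
\end{equation*}
and fix a threshold $\lambda_0 := C_0\bigl(\tfrac{1}{|2Q_0|}\int_{2Q_0}|F|^2\,d\sigma\bigr)^{1/2}$ with $C_0$ chosen large enough that any maximal dyadic $Q \subset Q_0$ whose $L^2$-average of $|F|$ exceeds $\lambda_0$ automatically satisfies $|Q| \leq \beta|Q_0|$, so that the hypothesis applies.

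For each $\lambda > \lambda_0$, I would decompose $\{F^\# > \lambda\} \cap Q_0$ as a disjoint union $\bigcup_k Q_k$ of maximal dyadic subcubes on which the $L^2$-average of $|F|$ exceeds $\lambda$. By maximality and a bounded number of doublings, $\tfrac{1}{|\alpha Q_k|}\int_{\alpha Q_k}|F|^2 \leq C\lambda^2$. On each $Q_k$ apply the hypothesized splitting $|F| \leq |F_{Q_k}| + |R_{Q_k}|$ on $2Q_k$ and restrict attention to points $x \in Q_k$ with $g^\#(x) \leq \gamma\lambda$: the $L^p$-bound for $R_{Q_k}$ combined with Chebyshev gives $|\{x \in Q_k : |R_{Q_k}(x)| > \alpha\lambda/2\}| \leq C\alpha^{-p}|Q_k|$, while the $L^2$-bound for $F_{Q_k}$ combined with Chebyshev gives $|\{x \in Q_k : |F_{Q_k}(x)| > \alpha\lambda/2\}| \leq C\gamma^2\alpha^{-2}|Q_k|$. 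Summing over $k$ yields the good-$\lambda$ inequality
\begin{equation*}
|\{F^\# > \alpha\lambda,\ g^\# \leq \gamma\lambda\} \cap Q_0| \leq (C\alpha^{-p} + C\gamma^2\alpha^{-2})\,|\{F^\# > \lambda\} \cap Q_0|.
\end{equation*}

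Multiplying by $q(\alpha\lambda)^{q-1}\alpha$ and integrating in $\lambda \in (\lambda_0,\infty)$, after first truncating $F$ so every quantity below is finite, turns this into
\begin{equation*}
\int_{Q_0}(F^\#)^q\,d\sigma \leq C\lambda_0^q|Q_0| + (C\alpha^{q-p} + C\gamma^2\alpha^{q-2})\int_{Q_0}(F^\#)^q\,d\sigma + C(\alpha,\gamma)\int_{Q_0}(g^\#)^q\,d\sigma.
\end{equation*}
Because $2 < q < p$, first choosing $\alpha$ large and then $\gamma$ small makes the coefficient of the middle term arbitrarily small, so it can be absorbed on the left. The pointwise bound $|F| \leq F^\#$ a.e., the Hardy--Littlewood inequality $\|g^\#\|_{L^q} \leq C\|g\|_{L^q}$, and the defining formula for $\lambda_0$ then convert what remains into \eqref{shenThmConclusion}.

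The hard part is the parameter juggling combined with the absorption step: the strict inequalities $2 < q < p$ are used crucially (to ensure $\alpha^{q-p} \to 0$ and $\alpha^{q-2}$ stays manageable once $\gamma$ is chosen), and the threshold $\lambda_0$ must be proportional to the $L^2$-average of $F$ on $2Q_0$ precisely so that the stopping cubes fit inside $\beta Q_0$; this is what forces the $L^2$-average of $F$ to appear additively on the right-hand side of \eqref{shenThmConclusion}. The scale mismatch between the hypothesis (involving $\alpha Q$ and $2Q$) and the dyadic-parent scale produced by the Calder\'{o}n--Zygmund stopping is absorbed into $C_3$ via a bounded doubling iteration.
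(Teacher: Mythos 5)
The paper does not prove Theorem \ref{realVariableArgument}; it cites it from Shen \cite{shen:boundary}. Your good-$\lambda$ / Calder\'on--Zygmund architecture is the correct one and matches Shen's original argument, and your handling of the threshold $\lambda_0$ (so that stopping cubes satisfy $|Q|\le\beta|Q_0|$), the truncation needed before absorption, the role of the strict inequalities $2<q<p$ in the parameter choice, and the closing Hardy--Littlewood step are all sound.

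There is, however, one genuine gap. You apply Chebyshev directly to $|R_{Q_k}|$ and $|F_{Q_k}|$, which via $|F|\le|F_{Q_k}|+|R_{Q_k}|$ controls $|\{x\in Q_k: |F(x)|>\alpha\lambda\}|$; but the left-hand side of your good-$\lambda$ inequality is $|\{F^\#>\alpha\lambda,\ g^\#\le\gamma\lambda\}\cap Q_0|$, and the superlevel sets of the maximal function $F^\#$ can be strictly larger than those of $|F|$, so the stated inequality does not follow from those pointwise Chebyshev bounds. The standard repair: for $x\in Q_k$ with $F^\#(x)>\alpha\lambda$ (and $\alpha$ large), maximality of $Q_k$ forces the averaging cube achieving the supremum to sit inside $2Q_k$, so $F^\#(x)^2\le 2\,M_{Q_k}\bigl(|F_{Q_k}|^2\bigr)(x)+2\,M_{Q_k}\bigl(|R_{Q_k}|^2\bigr)(x)+C\lambda^2$, where $M_{Q_k}$ is the maximal operator localized to $Q_k$. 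One then applies the weak-$(1,1)$ bound for $M_{Q_k}$ on $|F_{Q_k}|^2\in L^1(2Q_k)$ (producing your $C\gamma^2\alpha^{-2}$ factor) and the strong-$(p/2,p/2)$ bound, valid since $p/2>1$, for $M_{Q_k}$ on $|R_{Q_k}|^2\in L^{p/2}(2Q_k)$ (producing your $C\alpha^{-p}$ factor). This reproduces exactly the coefficients you wrote, so the downstream integration and absorption go through unchanged --- the intermediate maximal-function step was simply elided. A secondary point you flag but do not resolve: a purely dyadic stopping time does not by itself control $\frac{1}{|\alpha Q_k|}\int_{\alpha Q_k}|F|^2$ by $C\lambda^2$, since $\alpha Q_k$ need not lie inside any dyadic ancestor of $Q_k$ whose average you control; the clean fix is to base the stopping set on the noncentered localized maximal function, so that some point $y$ in the dyadic parent of $Q_k$ is outside the stopping set and $\alpha Q_k$ is contained in a cube through $y$ of comparable size.
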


\noindent We will then show that the condition given by estimate
(\ref{simplerConditionEstimate}) implies the reverse H\"{o}lder
condition.  Finally, condition (\ref{simplerConditionEstimate}) will
be established for some $\lambda >3$ to show that the Dirichlet
problem is uniquely solvable for $p$ in the range $2-\varepsilon < p
< \frac{2(d-1)}{d-3} + \varepsilon$. The proof will closely follow
the argument used by Shen in \cite{shen:necsuff}.

The paper is organized as follows: Theorem
\ref{necessarySufficientCondition} will be proved in section 2 and
Theorem \ref{simplerConditionThm} will be proved in section 3.
Finally, Corollary \ref{dirichletProblem} will be proved in section
4.

In preparation of this paper, the author learned of the work of Matt
Wright and Marius Mitrea \cite{wright} on the transmission problem
for the Stokes system.  As a corollary of their work they obtain a
proof of Corollary \ref{dirichletProblem}.  The proof provided in
this paper is a more direct approach to the problem.  Finally, the
author would like to acknowledge Zhongwei Shen for many very helpful
conversations.

\section{A Sufficient Condition}

The goal of this section is to prove Theorem
\ref{necessarySufficientCondition} which establishes a sufficient
condition for the solvability of the $L^p$ Dirichlet problem when
$p>2$ and $d\geq 4$.

\begin{proof}(of Theorem \ref{necessarySufficientCondition})\mbox{}

        First, note that the uniqueness for $p>2$ follows from the
        uniqueness for $p=2$.  Let $\vec{f}\in
        L^p_N(\partial\Omega,\rn{d})$ and let $\vec{u}$ be the solution
        of the $L^2$ Dirichlet problem with data $\vec{f}$.  We'll
        show that if $p>2$, then

        \begin{eqnarray}
            \left(\frac{1}{s^{d-1}}\int_{B(P,s)\cap \partial\Omega}
            |(\vec{u})^*|^p\,d\sigma\right)^{1/p} &\leq&
            C\left(\frac{1}{s^{d-1}}\int_{B(P,cs)\cap \partial\Omega}
            |(\vec{u})^*|^2\,d\sigma\right)^{1/2} \nonumber \\ && \qquad+
            C\left(\frac{1}{s^{d-1}}\int_{B(P,cs)\cap
            \partial\Omega}|\vec{f}|^p\,d\sigma\right)^{1/p},
            \label{ballReverse}
        \end{eqnarray}

        \noindent for any $P\in \partial\Omega$ and $0<s\leq cr_0$.
        Then, by covering $\partial\Omega$ with a finite number of
        balls of radius $cr_0$, estimate (\ref{ballReverse}) implies
        that

        \begin{eqnarray*}
            \|(\vec{u})^*\|_p &\leq&
            C_p\left(|\partial\Omega|^{\frac{1}{p}-\frac{1}{2}}\|(\vec{u})^*\|_2
            + \|\vec{f}\|_p\right) \\
            &\leq&
            C_p\left(|\partial\Omega|^{\frac{1}{p}-\frac{1}{2}}\|\vec{f}\|_2
            + \|\vec{f}\|_p \right) \\
            &\leq& C_p \|\vec{f}\|_p.
        \end{eqnarray*}

        \noindent Here we used the fact that $\vec{u}$ is the
        solution of the $L^2$ Dirichlet problem with data $\vec{f}$
        and H\"{o}lder's inequality.  It remains to establish
        estimate (\ref{ballReverse}); its proof relies on Theorem
        \ref{realVariableArgument}.

        Fix $Q\in \partial\Omega$ and $0<r<r_0$.  By rotation
        and translation we may assume that $Q=0$ and

        \begin{eqnarray*}
            B(0,cr_0) \cap \Omega &=& B(0,cr_0)\cap \{(x',x_d)\in
            \rn{d}: x_d>\psi(x') \}, \\
            B(0,cr_0) \cap \partial\Omega &=& B(0,cr_0) \cap \{
            (x',\psi(x')):x'\in\rn{d-1}\},
        \end{eqnarray*}

        \noindent where $\psi$ is a Lipschitz function on $\rn{d-1}$.

        Consider the surface cube $I_r$.  Write
        $\vec{u}=\vec{u}_1+\vec{u}_2$ where $\vec{u}_1$ is a solution to the $L^2$
        Dirichlet problem with boundary data $$\vec{f}_1=\left\{ \begin{array}{ll} \vec{f}
        & \hspace{0.1in} \mbox{ on } I_{8r} \\ \beta\vec{\alpha} & \hspace{0.1in} \mbox{ on }
        \partial\Omega\backslash I_{8r}, \end{array}\right.$$  where $\vec{\alpha}\in C^{\infty}(\rn{d},\rn{d})$
        is chosen so that $|\vec{\alpha}|\leq C_0$ and $\vec{\alpha}\cdot N\geq C_1>0$ (such a vector field has
        been shown to exist; for example, see the work of Verchota in \cite{verchota}).  Here $\beta$ is a constant depending
        on $\vec{f}$ chosen so that $\vec{f}_1\in
        L^2_N(\partial\Omega,\rn{d})$, i.e.

        \begin{equation} \label{beta}
            \beta =-
            \frac{\int_{I_{8r}} \vec{f}\cdot
            N\,d\sigma}{\int_{\partial\Omega\backslash I_{8r}} \vec{\alpha}\cdot
            N\,d\sigma}.
        \end{equation}

        \noindent Next, we verify the conditions of the real variable argument
        in Theorem \ref{realVariableArgument}.  Let

        \begin{eqnarray*}
            F &=& |(\vec{u})^*|, \\
            g &=& |\vec{f}|, \\
            F_Q &=& 2|(\vec{u}_1)^*|, \\
            R_Q &=& 2|(\vec{u}_2)^*|.
        \end{eqnarray*}

        \noindent Now, using the $L^2$ estimates for the Dirichlet problem we obtain

        {\allowdisplaybreaks
        \begin{eqnarray*}
            \frac{1}{|I_{2r}|}\int_{I_{2r}} |F_Q|^2\,d\sigma &\leq&
            \frac{C}{|I_{2r}|} \int_{\partial\Omega}
            |(\vec{u}_1)^*|^2\,d\sigma \\
            &\leq& \frac{C}{|I_{2r}|} \int_{\partial\Omega}
            |\vec{f}_1|^2\,d\sigma \\
            &\leq& \frac{C}{|I_{8r}|} \int_{I_{8r}}
            |\vec{f}|^2\,d\sigma +
            \frac{C|\beta|^2}{|I_{8r}|}\int_{\partial\Omega\backslash I_{8r}}
            |\vec{\alpha}|^2\,d\sigma \\
            &\leq& \frac{C}{|I_{8r}|}
            \int_{I_{8r}}|\vec{f}|^2\,d\sigma +
            C\frac{|\partial\Omega\backslash I_{8r}|}{|I_{8r}|}|\beta|^2 \\
            &\leq& \frac{C}{|I_{8r}|} \int_{I_{8r}} |\vec{f}|^2\,d\sigma +
            C\frac{|\partial\Omega\backslash I_{8r}|}{|I_{8r}|}\frac{|I_{8r}| \int_{I_{8r}} |\vec{f}|^2\,d\sigma}{\left|
            \int_{\partial\Omega\backslash I_{8r}} \vec{\alpha}\cdot
            N\,d\sigma \right|^2} \\
            &\leq& \frac{C}{|I_{8r}|} \int_{I_{8r}}
            |\vec{f}|^2\,d\sigma +
            \frac{C}{|\partial\Omega\backslash I_{8r}|}\int_{I_{8r}}|\vec{f}|^2\,d\sigma
            \\
            &\leq& \frac{C}{|I_{8r}|} \int_{I_{8r}}
            |\vec{f}|^2\,d\sigma
            \leq C\sup_{Q'\supset I_r} \frac{1}{|Q'|} \int_{Q'}
            |g|^2\,d\sigma.
        \end{eqnarray*}
        }%

        \noindent Note that $\vec{u}_2$ is a solution with $(\vec{u}_2)^*\in L^2(\partial\Omega)$ and
        $\vec{u}_2=0$ on $I_{8r}$.  Then, using
        the reverse H\"{o}lder Inequality
        (\ref{reverseHolder}) and the same estimates on $\vec{u}_1$
        as above, we obtain

        \begin{eqnarray}
            \lefteqn{\left( \frac{1}{|I_{2r}|}\int_{I_{2r}}
            |R_Q|^{p}\,d\sigma\right)^{1/p} \leq \left(
            \frac{C}{|I_{2r}|}\int_{I_{2r}}
            |(\vec{u}_2)^*|^p\,d\sigma\right)^{1/p}} \nonumber \\
            &\leq& C\left(\frac{1}{|I_{4r}|}\int_{I_{4r}}
            |(\vec{u}_2)^*|^2\,d\sigma\right)^{1/2} \nonumber\\
            &\leq& C \left( \frac{1}{|I_{4r}|}\int_{I_{4r}}
            |(\vec{u})^*|^2\,d\sigma\right)^{1/2} +
            C\left(\frac{1}{|I_{4r}|} \int_{I_{4r}}
            |(\vec{u}_1)^*|^2\,d\sigma \right)^{1/2}\nonumber \\
            &\leq& C\left(\frac{1}{|I_{4r}|}\int_{I_{4r}} |F|^2\,d\sigma\right)^{1/2}  +
            C\sup_{Q'\supset I_{r}} \left(\frac{1}{|Q'|} \int_{|Q'|}
            |g|^2\,d\sigma \right)^{1/2}. \label{RQEst}
        \end{eqnarray}

        \noindent Thus, both conditions of Theorem \ref{realVariableArgument} are
        satisfied and estimate (\ref{ballReverse}) is proven.
        Therefore, the solvability of the $L^p$ Dirichlet problem has been established.
\end{proof}

\begin{remark}
    In establishing estimate (\ref{RQEst}), the reverse H\"{o}lder
    condition was applied to a surface cube but was stated in
    Theorem \ref{necessarySufficientCondition} for surface balls.
    The reverse H\"{o}lder inequality for surface cubes can be
    obtained from that for surface balls by subdividing the surface
    cube into a finite number of smaller cubes that are contained in
    slightly larger surface balls.  This estimate can be made in
    such a way that the constant still depends only on the Lipschitz
    character of $\Omega$.
\end{remark}

\section{A Simpler and Stronger Sufficient Condition}

We begin this section by recalling a few known results.  The
following Caccioppoli's inequality  is contained in Theorem 2.2 of
\cite{giaquinta}.

\begin{lemma}[Caccioppoli's Inequality] \label{caccioppoli}
    Let $x_0 \in \overline{\Omega}$ and $r>0$ be small.  Assume that
    $(\vec{u},p)$ is a solution of the Stokes system
    (\ref{stokesSystem}) in $T(x_0,3r)$ such that $\vec{u}=0$ on
    $\Delta(x_0,3r)$.  Then,

    \begin{equation}
        \int_{T(x_0,r)} |\nabla \vec{u}|^2\,dx \leq
        \frac{C}{r^2}\int_{T(x_0,2r)} |\vec{u}|^2\,dx.
    \end{equation}

\end{lemma}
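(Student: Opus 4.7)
The plan is to test $\Delta \vec{u} = \nabla p$ against a divergence-free cutoff of $\vec{u}$, which annihilates the pressure term, and then run a hole-filling/iteration argument to absorb the resulting gradient terms. The natural test function $\eta^2\vec{u}$ is not divergence-free when $\nabla\eta \not\equiv 0$, so I will first correct it using Bogovskii's operator.

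Fix $r \leq \rho_1 < \rho_2 \leq 2r$ and choose $\eta \in C_c^\infty(\rn{d})$ with $\eta \equiv 1$ on $B(x_0,\rho_1)$, $\mathrm{supp}\,\eta \subset B(x_0,\rho_2)$, and $|\nabla \eta| \leq C(\rho_2-\rho_1)^{-1}$. The function $g := \diverg{(\eta^2 \vec{u})} = 2\eta\,\nabla\eta\cdot\vec{u}$ has mean zero on $T(x_0,\rho_2)$: integration by parts, together with $\diverg{\vec{u}} = 0$, the vanishing of $\eta$ on the spherical portion of $\partial T(x_0,\rho_2)$, and $\vec{u} = 0$ on $\partial T(x_0,\rho_2)\cap\partial\Omega \subset \Delta(x_0,3r)$, forces $\int_{T(x_0,\rho_2)} g\,dx = 0$. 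Bogovskii's theorem on the Lipschitz domain $T(x_0,\rho_2)$ then supplies $\vec{\phi}\in H_0^1(T(x_0,\rho_2);\rn{d})$ with $\diverg{\vec{\phi}} = g$ and
$$\|\nabla\vec{\phi}\|_{L^2(T(x_0,\rho_2))} \leq C\|g\|_{L^2} \leq \frac{C}{\rho_2-\rho_1}\|\vec{u}\|_{L^2(T(x_0,\rho_2))},$$
the constant $C$ depending only on the Lipschitz character of $\Omega$, by scale-invariance of the Bogovskii estimate and the uniform Lipschitz character of the dilated domains $\rho_2^{-1}T(x_0,\rho_2)$.

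The field $\vec{v} := \eta^2\vec{u} - \vec{\phi}$ is divergence-free and lies in $H_0^1(T(x_0,\rho_2);\rn{d})$, so $\int \vec{v}\cdot\nabla p\,dx = 0$. Pairing this with $\nabla p = \Delta\vec{u}$ and integrating by parts (all boundary terms vanish, since $\vec{u}=0$ on the part of $\partial T(x_0,\rho_2)$ inside $\partial\Omega$ while $\vec{v}=0$ elsewhere) yields
$$\int \eta^2 |\nabla \vec{u}|^2\,dx = -2\int \eta\,u_i\,(\partial_j \eta)(\partial_j u_i)\,dx + \int (\partial_j \phi_i)(\partial_j u_i)\,dx.$$
Cauchy-Schwarz and Young's inequality, together with the bound on $\|\nabla\vec{\phi}\|_{L^2}$, produce the hole-filling estimate
$$\int_{T(x_0,\rho_1)} |\nabla \vec{u}|^2\,dx \leq \tfrac{1}{2}\int_{T(x_0,\rho_2)} |\nabla \vec{u}|^2\,dx + \frac{C}{(\rho_2-\rho_1)^2}\int_{T(x_0,2r)} |\vec{u}|^2\,dx.$$
A standard iteration lemma applied to $\rho \mapsto \int_{T(x_0,\rho)} |\nabla\vec{u}|^2\,dx$ on $[r,2r]$ then absorbs the gradient term on the right and delivers the stated bound. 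The main obstacle throughout is disposing of the pressure cleanly, and this is exactly what the Bogovskii correction accomplishes; the key technical point is the uniformity (in $x_0$ and $\rho_2$) of the Bogovskii constant for the family $\{T(x_0,\rho_2)\}$.
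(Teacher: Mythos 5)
Your argument is correct and is essentially the standard proof of the boundary Caccioppoli inequality for the Stokes system; the paper itself does not prove this lemma but simply cites Theorem~2.2 of Giaquinta's book, where the same strategy (correct the cutoff test function with a Bogovskii solution of the divergence equation so that the pressure drops out, then hole-fill) is used. The one place that deserves more care is your appeal to ``the uniform Lipschitz character of the dilated domains $\rho_2^{-1}T(x_0,\rho_2)$'': for $x_0$ slightly inside $\Omega$ the truncated ball $B(x_0,\rho_2)\cap\Omega$ can develop a near-tangential intersection and need not be Lipschitz with a controlled constant, so the Bogovskii bound is not automatically uniform over all such domains. The clean fix, consistent with the paper's setup, is to run the hole-filling between the star-shaped regions $Z_{\rho}$ (introduced in the paper precisely because they are Lipschitz with constants independent of $\rho$ and $x_0$) sandwiched between $T(x_0,r)$ and $T(x_0,2r)$, or to split into the interior case $B(x_0,3r)\subset\Omega$ (where ordinary balls suffice) and the boundary case handled in a coordinate chart; with either modification the rest of your computation goes through unchanged and yields the stated estimate.
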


\noindent Then, Lemma \ref{caccioppoli} and standard arguments can
be used to prove the following lemma.

\begin{lemma}[Higher Integrability] \label{higherInt}
    Under the same assumptions as in Lemma \ref{caccioppoli} we have

    \begin{equation}
        \left(\frac{1}{r^d}\int_{T(x_0,r)} |\nabla
        \vec{u}|^q\,dx\right)^{1/q} \leq C
        \left(\frac{1}{r^d}\int_{T(x_0,2r)} |\nabla
        \vec{u}|^2\,dx\right)^{1/2},
    \end{equation}

    \noindent where $q>2$ depends only on the Lipschitz character of
    $\Omega$.
\end{lemma}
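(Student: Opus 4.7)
The plan is to combine Caccioppoli's inequality with an appropriate Poincar\'e--Sobolev inequality to obtain a weak reverse H\"older inequality for $|\nabla \vec{u}|$ at all scales, and then invoke Gehring's lemma to self-improve the exponent from $2$ to some $q>2$.

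First, I would show that for every $y\in T(x_0, 5r/2)$ and $\rho>0$ with $B(y, 2\rho)\subset T(x_0, 3r)$, the following weak reverse H\"older inequality holds:
\begin{equation*}
\left(\frac{1}{\rho^d}\int_{B(y,\rho)\cap \Omega}|\nabla \vec{u}|^2\,dx\right)^{1/2}
\leq C\left(\frac{1}{\rho^d}\int_{B(y,2\rho)\cap \Omega}|\nabla \vec{u}|^{s}\,dx\right)^{1/s},
\end{equation*}
where $s=\tfrac{2d}{d+2}<2$. There are two cases. When $B(y,2\rho)\subset \Omega$ lies in the interior, I apply Caccioppoli's inequality to $\vec{u}-c$ (the Stokes system is invariant under adding constants to $\vec{u}$) with $c$ the average of $\vec u$ over $B(y,2\rho)$, and then use the Sobolev--Poincar\'e inequality to dominate the $L^2$ norm of $\vec u - c$ by the $L^s$ norm of $\nabla \vec u$. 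When $B(y,2\rho)$ meets $\partial\Omega$ inside $\Delta(x_0,3r)$, I apply Lemma \ref{caccioppoli} directly (no constant subtracted), and use the Sobolev--Poincar\'e inequality for functions vanishing on a portion of the boundary of comparable measure; this requires only the Lipschitz character of $\Omega$.

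Next, I extend $|\nabla \vec{u}|$ by zero outside $T(x_0,3r)$ so that the reverse H\"older inequality above can be written in terms of Euclidean balls on $\mathbb{R}^d$, and then apply Gehring's lemma (see, e.g., Theorem 1.2 of Chapter V of Giaquinta). This yields an exponent $q>2$, depending only on $s$, $d$ and the constants appearing above (hence only on the Lipschitz character of $\Omega$), such that
\begin{equation*}
\left(\frac{1}{\rho^d}\int_{B(y,\rho)\cap \Omega}|\nabla \vec{u}|^q\,dx\right)^{1/q}
\leq C\left(\frac{1}{\rho^d}\int_{B(y,2\rho)\cap \Omega}|\nabla \vec{u}|^2\,dx\right)^{1/2}
\end{equation*}
for all admissible $y,\rho$.

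Finally, I cover $T(x_0,r)$ by a controlled number of balls $B(y_i, \rho_i)$ with $B(y_i, 2\rho_i)\subset T(x_0, 2r)$, sum the above inequality over the cover using $\ell^q\hookrightarrow \ell^2$ and H\"older, and rescale to obtain the stated inequality. The main obstacle is the boundary Sobolev--Poincar\'e step: one must know that functions in $W^{1,s}(T(x_0,3r))$ vanishing on $\Delta(x_0,3r)$ satisfy a Sobolev embedding with constants controlled only by the Lipschitz character, which is where the geometric hypothesis on $\Omega$ really enters. Once that is in hand, Gehring's lemma and a standard covering argument deliver the conclusion.
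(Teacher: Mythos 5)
Your proposal is correct and is precisely the ``standard argument'' the paper invokes without spelling it out: the paper only states that Lemma \ref{higherInt} follows from Caccioppoli's inequality and standard arguments, and the standard argument is exactly your chain Caccioppoli $\Rightarrow$ Sobolev--Poincar\'e $\Rightarrow$ weak reverse H\"older with an exponent gap $\Rightarrow$ Gehring's lemma (Giaquinta, Chapter V), followed by a covering argument. The one technical point worth flagging is the intermediate regime in the case split: a ball $B(y,2\rho)$ may touch $\partial\Omega$ in a set of small measure, so neither the pure interior argument nor the boundary Poincar\'e inequality applies directly; the usual fix is to distinguish according to whether $\mathrm{dist}(y,\partial\Omega)\geq \rho$ (treat as interior after shrinking) or $\mathrm{dist}(y,\partial\Omega)<\rho$ (replace $y$ by a nearby boundary point and enlarge the ball so that $\vec u$ vanishes on a boundary portion of measure comparable to $\rho^{d-1}$). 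With that refinement, your proof is complete and the dependence of $q$ only on the Lipschitz character of $\Omega$ comes out exactly as you describe.
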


\noindent Finally, we recall the interior estimates for the Stokes
system.

\begin{lemma}[Interior Estimates]
    Let $\vec{u}$ be a solution of the Stokes System
    (\ref{stokesSystem}) in $\Omega$.  Suppose that $B(x,r)\subset
    \Omega$.  Then, $$|D^{\alpha} \vec{u}(x)| \leq
    \frac{C_{\alpha}}{r^{d+|\alpha|}} \int_{B(x,r)} |\vec{u}(y)|\,dy,$$ for any
    multi-index $\alpha$, where $C_{\alpha}$ depends only on
    $|\alpha|$ and $d$.
\end{lemma}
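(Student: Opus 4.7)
The plan is to reduce the interior estimate for the Stokes system to the classical interior estimate for biharmonic functions.

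First, I would verify that the pressure is harmonic in $\Omega$. Taking the divergence of $\Delta \vec{u}=\nabla p$ and using $\diverg{\vec{u}}=0$ gives $\Delta p=\Delta(\diverg{\vec{u}})=0$. Since $p$ is smooth on the interior, applying $\Delta$ to the momentum equation yields $\Delta^2 u_i=\partial_i(\Delta p)=0$, so every component of $\vec{u}$ is biharmonic on any ball $B(x,r)\subset\Omega$.

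Next, I would invoke the standard pointwise interior estimate for biharmonic functions: if $v$ is biharmonic in $B(x,r)$, then for every multi-index $\alpha$,
$$
|D^{\alpha}v(x)|\leq \frac{C_{\alpha}}{r^{d+|\alpha|}}\int_{B(x,r)}|v(y)|\,dy,
$$
with $C_{\alpha}$ depending only on $|\alpha|$ and $d$. A clean route to this estimate is via the Almansi representation: write $v(y)=h_0(y)+|y-x|^2 h_1(y)$ with $h_0,h_1$ harmonic in $B(x,r)$, and apply the classical Cauchy estimates for harmonic functions to $h_0$ and $h_1$. A self-contained alternative is to iterate an interior Caccioppoli-type inequality for biharmonic functions on concentric balls, bounding $\|v\|_{H^k(B(x,r/2))}$ in terms of $\|v\|_{L^2(B(x,r))}$ for $k>|\alpha|+d/2$, then invoke Sobolev embedding to obtain a pointwise bound, and finally pass from the $L^2$-side to the $L^1$-side via a standard shrinking-ball/H\"older interpolation trick.

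Applying the biharmonic estimate componentwise to $\vec{u}$ delivers the lemma. The main subtlety is the biharmonic pointwise estimate, but it is entirely classical and may simply be cited; no Lipschitz geometry or non-tangential boundary behavior enters the argument since $B(x,r)$ lies entirely in the interior of $\Omega$.
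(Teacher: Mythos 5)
The paper states this lemma without proof, simply ``recalling'' it as a known interior estimate for the Stokes system, so there is no proof in the paper to compare against.

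Your reduction to biharmonic functions is correct. Taking the divergence of $\Delta\vec{u}=\nabla p$ and using $\diverg{\vec{u}}=0$ gives $\Delta p=0$; then $\Delta^2 u_i=\partial_i\Delta p=0$, so each component of $\vec{u}$ is biharmonic on $B(x,r)$ (interior smoothness of weak solutions to the Stokes system being standard elliptic regularity). The remaining step, the pointwise Cauchy estimate $|D^\alpha v(x)|\leq C_\alpha r^{-d-|\alpha|}\int_{B(x,r)}|v|$ for biharmonic $v$, is indeed classical. One caution about your first suggested route: the Almansi decomposition $v=h_0+|y-x|^2h_1$ does not by itself yield $L^1$ control of $h_0$ and $h_1$ separately in terms of $\int|v|$; making that bound rigorous requires an additional argument. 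Your second route --- interior Caccioppoli iterated on concentric balls, Sobolev embedding, and then the standard self-improving iteration to pass from the $L^2$ to the $L^1$ right-hand side --- is self-contained and closes the proof. Alternatively, one could dispense with the biharmonic reduction entirely and invoke Agmon--Douglis--Nirenberg interior estimates for elliptic systems, for which the Stokes system is a model example; your route is more elementary, at the cost of handling $\vec{u}$ and $p$ somewhat indirectly.
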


We will also need to use the square function estimates.  Recall that
the square function $S(w)$ is defined by

\begin{equation}
    S(w)(Q) = \left(\int_{\Gamma(Q)} \frac{|\nabla
    w(x)|^2}{|x-Q|^{d-2}}\,dx\right)^{1/2},
\end{equation}

\noindent for $Q\in \partial\Omega$.  We also define

\begin{equation}
    \tilde{S}(w)(Q) = \left(\int_{\Gamma(Q)} \frac{|\nabla^2
    w(x)|^2}{|x-Q|^{d-4}}\,dx \right)^{1/2},
\end{equation}

\noindent for $Q\in\partial\Omega$.  The following square function
estimates for solutions of the Stokes system (\ref{stokesSystem})
established in \cite{brown:stokes,dkpv:areaintegral} will be needed:

\begin{eqnarray}
    \left\|S( \vec{u})\right\|_{L^p(\partial\Omega)}
    &\leq& C \| (\vec{u})^*\|_{L^p(\partial\Omega)}, \label{sqFunct1}\\
    \left\| ( \vec{u})^*\right\|_{L^p(\partial\Omega)}
    &\leq& C \left\|
    S(\vec{u})\right\|_{L^p(\partial\Omega)} +
    C|\vec{u}(P_0)||\partial\Omega|^{1/p}, \label{sqFunct2}
\end{eqnarray}

\noindent where $0<p<\infty$, $P_0\in \Omega$, and $C$ depends on
the Lipschitz character of $\Omega$.  Then, using Lemma 2 on page
216 of \cite{stein:singularintegrals} along with the square function
estimate (\ref{sqFunct2}) we get that

\begin{equation} \label{nonTangentialTildeSquareEstimate}
    \|(\vec{u})^*\|_{L^p(\partial\Omega)} \leq C
    \|\tilde{S}(\vec{u})\|_{L^p(\partial\Omega)}  +
    C|\partial\Omega|^{\frac{1}{p}-\frac{1}{2}}\|(\vec{u})^*\|_{L^2(\partial\Omega)}.
\end{equation}

The following lemma found in \cite{shen:necsuff} is stated and
proved here for the sake of completeness.

\begin{lemma} \label{shenSquareTildeEstimate}
    Let $p>2$.  Then for any $\gamma \in (0,1)$ and $w\in
    C^2(\Omega)$ we have

    \begin{equation}
        \int_{\partial\Omega} |\tilde{S}(w)|^p\,d\sigma \leq
        C_{\gamma}\{\mbox{diam}(\Omega)\}^{\gamma} \int_{\Omega}
        |\nabla ^2w(x)|^p[\delta(x)]^{2p-1-\gamma}\,dx,
    \end{equation}

    \noindent where $\delta(x)=\mbox{dist}(x,\partial\Omega)$.
\end{lemma}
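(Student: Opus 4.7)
The plan is to use H\"{o}lder's inequality directly inside the definition of $\tilde{S}(w)(Q)$ to pull out the $|\nabla^2 w|^p$ factor, then swap the order of integration and use the key geometric fact that $|x - Q| \sim \delta(x)$ on the non-tangential cone $\Gamma(Q)$.

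More precisely, I would factor the integrand
$\frac{|\nabla^2 w(x)|^2}{|x-Q|^{d-4}}
= \left(\frac{|\nabla^2 w(x)|^p}{|x-Q|^{a}}\right)^{2/p}
\cdot \left(\frac{1}{|x-Q|^{b}}\right)^{(p-2)/p}$
under the constraint $2a + (p-2)b = p(d-4)$, and apply H\"{o}lder's inequality with conjugate exponents $p/2$ and $p/(p-2)$. After raising to the power $p/2$ this gives
$$\tilde{S}(w)(Q)^p \le \left(\int_{\Gamma(Q)} \frac{|\nabla^2 w(x)|^p}{|x-Q|^a}\,dx\right)
\cdot \left(\int_{\Gamma(Q)} \frac{dx}{|x-Q|^b}\right)^{(p-2)/2}.$$
For the second factor to be controlled by $\{\mathrm{diam}(\Omega)\}^{d-b}$ I need $b < d$, in which case standard polar estimates (using that $\Gamma(Q) \subset B(Q,\mathrm{diam}(\Omega))$) give the bound $C\{\mathrm{diam}(\Omega)\}^{d-b}$.

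Next I would integrate over $Q \in \partial\Omega$ and invoke Fubini to rewrite the first factor as
$$\int_{\partial\Omega}\int_{\Gamma(Q)} \frac{|\nabla^2 w(x)|^p}{|x-Q|^a}\,dx\,d\sigma(Q)
= \int_\Omega |\nabla^2 w(x)|^p \int_{\{Q\in\partial\Omega : x\in \Gamma(Q)\}} \frac{d\sigma(Q)}{|x-Q|^a}\,dx.$$
On the set $\{Q : x \in \Gamma(Q)\}$ one has $\delta(x) \le |x-Q| < 2\delta(x)$, so $|x-Q|^{-a} \le C\,\delta(x)^{-a}$ regardless of the sign of $a$. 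Since this set is contained in $\partial\Omega\cap B(x,2\delta(x))$, the Lipschitz character of $\Omega$ gives surface measure at most $C\,\delta(x)^{d-1}$. Hence the inner surface integral is bounded by $C\,\delta(x)^{d-1-a}$.

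Finally I would choose $a = d - 2p + \gamma$, forcing the weight to be $\delta(x)^{2p-1-\gamma}$ as desired. The relation $2a + (p-2)b = p(d-4)$ then yields $b = d - \frac{2\gamma}{p-2}$, which is strictly less than $d$ precisely because $\gamma \in (0,1)$ and $p > 2$, so the $b<d$ condition is satisfied. Moreover $(d-b)(p-2)/2 = \gamma$, so the diameter factor appears with exactly the exponent $\gamma$ in the statement. Putting these pieces together yields the claimed inequality. The only delicate point is the bookkeeping that ties together the constraint $2a+(p-2)b = p(d-4)$, the integrability condition $b<d$, and the target exponent $d-1-a = 2p-1-\gamma$; once these are seen to be simultaneously solvable exactly when $\gamma > 0$, the proof is routine.
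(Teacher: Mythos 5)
Your proposal is correct and follows essentially the same route as the paper: factor the integrand, apply H\"older with exponents $p/2$ and $p/(p-2)$, bound the auxiliary cone integral by a power of $\mathrm{diam}(\Omega)$, and finish with Fubini together with the geometric facts that $|x-Q|\sim\delta(x)$ on $\Gamma(Q)$ and $\sigma(\{Q:x\in\Gamma(Q)\})\lesssim\delta(x)^{d-1}$. The only cosmetic difference is that you leave the exponents $a$ and $b$ as unknowns and solve the bookkeeping constraints at the end, while the paper writes the specific exponents (equivalently $a=d+\gamma-2p$, $b=d-\tfrac{2\gamma}{p-2}$) directly into the splitting.
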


\begin{proof}
    We begin by re-writing $\tilde{S}(w)$ in the following manner
    $$\tilde{S}(w)(Q) = \left(\int_{\Gamma(Q)} \frac{|\nabla ^2
    w(x)|^2}{|x-Q|^{\frac{2(d+\gamma)}{p}-4}}\cdot
    \frac{dx}{|x-Q|^{\frac{(p-2)d-2\gamma}{p}}}\right)^{1/2}.$$
    Then, using H\"{o}lder's inequality we obtain

    \begin{eqnarray*}
        \tilde{S}(w)(Q) &\leq& C\left(\int_{\Gamma(Q)}
        \frac{|\nabla^2
        w(x)|^p}{|x-Q|^{d+\gamma-2p}}\,dx\right)^{1/p}\left(\int_{\Gamma(Q)}
        \frac{dx}{|x-Q|^{d-\frac{2\gamma}{p-2}}}\right)^{\frac{p-2}{2p}}
        \\
        &\leq& C \left(\int_{\Gamma(Q)} \frac{|\nabla^2
        w(x)|^p}{|x-Q|^{d+\gamma-2p}}\,dx\right)^{1/p}\left(\int_0^{\mbox{diam}(\Omega)}
        t^{\frac{2\gamma}{p-2}-1}\,dt \right)^{\frac{p-2}{2p}} \\
        &\leq&
        C\{\mbox{diam}(\Omega)\}^{\gamma/p}\left(\int_{\Gamma(Q)}
        \frac{|\nabla^2
        w(x)|^p}{|x-Q|^{d+\gamma-2p}}\,dx\right)^{1/p}.
    \end{eqnarray*}

    \noindent Finally, integrating $|\tilde{S}(w)(Q)|^p$ over
    $\partial\Omega$ we obtain

    \begin{eqnarray*}
        \int_{\partial\Omega} |\tilde{S}(w)(Q)|^p\,d\sigma &\leq& C
        \{\mbox{diam}(\Omega)\}^{\gamma} \int_{\partial\Omega}
        \int_{\Gamma(Q)} \frac{|\nabla^2
        w(x)|^p}{|x-Q|^{d+\gamma-2p}}\,dx\,d\sigma \\
        &\leq& C\{\mbox{diam}(\Omega)\}^{\gamma} \int_{\Omega}
        |\nabla^2w(x)|^p [\delta(x)]^{2p-\gamma-1}\,dx.
    \end{eqnarray*}
\end{proof}

\begin{splemma} \label{importantLemma}
    \emph{Let $p>2$.  Suppose that} $\Delta \vec{u} = \nabla p$
    \emph{and} $\mbox{div}(\vec{u})=0$ \emph{ in $\Omega$.  Then, for any $\gamma \in
    (0,1)$}

    \begin{eqnarray*}
        \int_{\partial\Omega} |(\vec{u})^*|^p\,d\sigma &\leq& C
        |\partial\Omega|^{1-\frac{p}{2}}\left(\int_{\partial\Omega} |(\vec{u})^*|^2\,d\sigma
        \right)^{p/2}  \\
        && \qquad + C_{\gamma} \{\mbox{diam}(\Omega)\}^{\gamma}
        \sup_{x\in \Omega} |\nabla^2
        \vec{u}|^{p-2}[\delta(x)]^{2p-2-\gamma} \int_{\partial \Omega}
        |(\nabla\vec{u})^*|^2\,d\sigma.
    \end{eqnarray*}
\end{splemma}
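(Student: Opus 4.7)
The plan is to start from the non-tangential/$\tilde{S}$ estimate \eqref{nonTangentialTildeSquareEstimate}. Raising both sides to the $p$-th power and using $(a+b)^p \leq C(a^p+b^p)$ gives
\[
\int_{\partial\Omega}|(\vec u)^*|^p\,d\sigma \leq C\int_{\partial\Omega}|\tilde S(\vec u)|^p\,d\sigma + C|\partial\Omega|^{1-\tfrac{p}{2}}\Bigl(\int_{\partial\Omega}|(\vec u)^*|^2\,d\sigma\Bigr)^{p/2}.
\]
The second term on the right is precisely the first term in the conclusion, so the task reduces to bounding $\int_{\partial\Omega}|\tilde S(\vec u)|^p\,d\sigma$ by the second term of the conclusion.

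For this, I would apply Lemma \ref{shenSquareTildeEstimate} with $w=\vec u$ to obtain
\[
\int_{\partial\Omega}|\tilde S(\vec u)|^p\,d\sigma \leq C_\gamma\{\operatorname{diam}(\Omega)\}^\gamma \int_\Omega |\nabla^2\vec u(x)|^p[\delta(x)]^{2p-1-\gamma}\,dx.
\]
Then I would split the integrand as
\[
|\nabla^2\vec u|^p [\delta(x)]^{2p-1-\gamma} = \bigl(|\nabla^2\vec u|^{p-2}[\delta(x)]^{2p-2-\gamma}\bigr)\cdot \bigl(|\nabla^2\vec u|^2\delta(x)\bigr),
\]
pull the first factor out as $\sup_{x\in\Omega}|\nabla^2\vec u|^{p-2}[\delta(x)]^{2p-2-\gamma}$, and be left with the weighted $L^2$ integral $\int_\Omega |\nabla^2\vec u|^2\delta(x)\,dx$.

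To finish, I need to control this weighted integral by $\int_{\partial\Omega}|(\nabla\vec u)^*|^2\,d\sigma$. The key observation is that, by Fubini applied to $S(\nabla\vec u)$,
\[
\int_{\partial\Omega} S(\nabla\vec u)^2\,d\sigma = \int_\Omega |\nabla^2\vec u(x)|^2 \int_{\{Q\in\partial\Omega\,:\,x\in\Gamma(Q)\}}\frac{d\sigma(Q)}{|x-Q|^{d-2}}\,dx,
\]
and a standard geometric computation shows the inner integral is comparable to $\delta(x)$ (the admissible $Q$'s lie in a surface piece of diameter $\sim\delta(x)$ at distance $\gtrsim \delta(x)$ from $x$). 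Since differentiating the Stokes system gives that $(\partial_j\vec u,\partial_j p)$ again solves $\Delta(\partial_j\vec u)=\nabla(\partial_j p)$ with $\operatorname{div}(\partial_j\vec u)=0$, the square function estimate \eqref{sqFunct1} applies componentwise to $\nabla\vec u$, yielding
\[
\int_\Omega|\nabla^2\vec u|^2\delta(x)\,dx \leq C\int_{\partial\Omega}S(\nabla\vec u)^2\,d\sigma \leq C\int_{\partial\Omega}|(\nabla\vec u)^*|^2\,d\sigma.
\]
Combining everything gives the stated bound.

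The main obstacle I anticipate is the Fubini step: identifying the admissible region $\{Q\in\partial\Omega: x\in\Gamma(Q)\}$ and verifying the inner integral is comparable to $\delta(x)$ uses the Lipschitz geometry of $\partial\Omega$ and the non-tangential aperture in $\Gamma(Q)$; this is routine but is the one place where care is required. The other mildly delicate point is checking that the square function estimate is legitimately available for $\nabla\vec u$, which follows because differentiation preserves the Stokes system.
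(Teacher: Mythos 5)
Your proposal follows essentially the same route as the paper: start from estimate \eqref{nonTangentialTildeSquareEstimate}, bound the $\tilde{S}$ term via Lemma \ref{shenSquareTildeEstimate}, factor out $\sup_{x\in\Omega}|\nabla^2\vec u|^{p-2}[\delta(x)]^{2p-2-\gamma}$, and control the remaining weighted $L^2$ integral by $\int_{\partial\Omega}|(\nabla\vec u)^*|^2\,d\sigma$ via the square function estimate \eqref{sqFunct1} applied to $\nabla\vec u$. The paper leaves the Fubini comparison $\int_\Omega|\nabla^2\vec u|^2\delta(x)\,dx \lesssim \int_{\partial\Omega}|S(\nabla\vec u)|^2\,d\sigma$ and the fact that $\nabla\vec u$ again solves the Stokes system implicit, and you correctly spelled out these two steps.
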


\begin{proof}
    Using Lemma \ref{shenSquareTildeEstimate} as well as the square
    function estimate (\ref{sqFunct1}) we obtain

    \begin{eqnarray*}
        \int_{\partial\Omega} |\tilde{S}(\vec{u})|^p\,d\sigma &\leq&
        C_{\gamma} \{\mbox{diam}(\Omega)\}^{\gamma} \int_{\Omega}
        |\nabla^2 \vec{u}|^p[\delta (x)]^{2p-1-\gamma}\,dx \\
        &\leq& C_{\gamma} \{\mbox{diam}(\Omega)\}^{\gamma} \sup_{x\in
        \Omega} |\nabla^2 \vec{u}|^{p-2}[\delta(x)]^{2p-2-\gamma}
        \int_{\Omega} |\nabla^2 \vec{u}|^2\delta(x)\,dx \\
        &\leq& C_{\gamma} \{\mbox{diam}(\Omega)\}^{\gamma}
        \sup_{x\in \Omega} |\nabla^2
        \vec{u}|^{p-2}[\delta(x)]^{2p-2-\gamma}\int_{\partial\Omega}
        |(\nabla \vec{u})^*|^2\,d\sigma.
    \end{eqnarray*}

    \noindent Combining this with estimate
    (\ref{nonTangentialTildeSquareEstimate}) we obtain

    \begin{eqnarray*}
        \int_{\partial\Omega} |(\vec{u})^*|^p\,d\sigma &\leq&
        C\int_{\partial\Omega} |\tilde{S}(\vec{u})|^p\,d\sigma +
        C|\partial\Omega|^{1-\frac{p}{2}}\left(\int_{\partial\Omega} |(\vec{u})^*|^2\,d\sigma
        \right)^{p/2} \\
        &\leq& C|\partial\Omega|^{1-\frac{p}{2}}\left(\int_{\partial\Omega}
        |(\vec{u})^*|^2\,d\sigma\right)^{p/2}
        \\
        && \qquad + C_{\gamma} \{\mbox{diam}(\Omega)\}^{\gamma}
        \sup_{x\in\Omega} |\nabla^2
        \vec{u}|^{p-2}[\delta(x)]^{2p-2-\gamma}\int_{\partial\Omega}
        |(\nabla \vec{u})^*|^2\,d\sigma.
    \end{eqnarray*}
\end{proof}

\begin{proof}(of Theorem \ref{simplerConditionThm}) \mbox{}

    Fix $\Delta(Q_0,r)$ with $Q_0\in \partial\Omega$ and $0<r<r_0$.
    Let $\vec{u}$ be a solution of the Stokes system
    (\ref{stokesSystem}) in $\Omega$ with the properties
    $(\vec{u})^*\in L^2(\partial\Omega)$ and $\vec{u}=0$ on
    $\Delta(Q_0,3r)$.  Now, using the assumption given by estimate
    (\ref{simplerConditionEstimate}), Caccioppoli's inequality, and the interior estimates, for
    any $x\in T(Q_0,r)$ we obtain

    \begin{eqnarray*}
        [\delta(x)]^2|\nabla^2 \vec{u}(x)| &\leq&
        C[\delta(x)]^2\left(\frac{1}{[\delta(x)]^d}\int_{B(x,c\delta(x))}
        |\nabla^2 \vec{u}(y)|^2\,dy\right)^{1/2} \\
        &\leq& \frac{C}{[\delta(x)]^{d/2}} \left(
        \int_{T(Q_0,\tilde{c}\delta(x))}
        |\vec{u}(y)|^2\,dy\right)^{1/2} \\
        &\leq& \frac{C}{[\delta(x)]^{d/2}} \left\{
        \left(\frac{\delta(x)}{r}\right)^{\lambda} \int_{T(Q_0,2r)}
        |\vec{u}(y)|^2\,dy \right\}^{1/2} \\
        &=& C \left(\frac{\delta(x)}{r}\right)^{\frac{\lambda-d}{2}}
        \left(\frac{1}{r^d}\int_{T(Q_0,2r)}
        |\vec{u}(y)|^2\,dy\right)^{1/2}.
    \end{eqnarray*}

    \noindent Thus, for $x\in T(Q_0,r)$ we have

    \begin{equation} \label{pointwiseSecondEstimate}
        |\nabla^2 \vec{u}(x)| \leq
        \frac{C}{[\delta(x)]^2}\left(\frac{\delta(x)}{r}\right)^{\frac{\lambda-d}{2}}
        \left(\frac{1}{r^d}\int_{T(Q_0,2r)} |\vec{u}(y)|^2\,dy
        \right)^{1/2}.
    \end{equation}

    By rotation and translation we may assume that $Q_0=0$ and
    $r_0=r_0(\Omega,d)>0$ is small enough so that

    \begin{eqnarray*}
        B(0,C_0r_0) \cap \Omega &=& B(0,C_0r_0)\cap \{(x',x_d) \in
        \rn{d-1}\times \rn{}: x_d> \psi(x')\}, \\
        B(0,C_0r_0)\cap \partial\Omega &=& B(0,C_0r_0) \cap \{
        (x',\psi(x')): x'\in \rn{d-1} \},
    \end{eqnarray*}

    \noindent where $\psi$ is a Lipschitz function on $\rn{d-1}$ and
    $\psi(0)=0$.  For $\rho\in (1,4)$ we also define

    \begin{eqnarray*}
        I_{\rho r} &=& \{ (x',\psi(x')): |x'|<\rho C_2 r\},\\
        Z_{\rho r} &=& \{ (x',x_d): |x'|<\rho C_2 r, \psi(x') < x_d
        < \psi(x')+\rho C_2 r \},
    \end{eqnarray*}

    \noindent where $C_2=C_2(d,\|\nabla \psi \|_{\infty})>0$ is
    small enough that $I_{3r}\subset \Delta(0,r)$ and $Z_{3r}\subset
    B(0,r)\cap \Omega$.  Let $\mathcal{M}_1$ and $\mathcal{M}_2$ be
    defined as follows:

    \begin{eqnarray*}
        \mathcal{M}_1(\vec{u})(Q) &=& \sup \{ |\vec{u}(x)|: x\in
        \Gamma(Q), |x-Q|<C_0 r\}, \\
        \mathcal{M}_2(\vec{u})(Q) &=& \sup \{ |\vec{u}(x)|: x\in
        \Gamma(Q), |x-Q|\geq C_0r \}.
    \end{eqnarray*}

    We begin by estimating $\mathcal{M}_2(\vec{u})$.  Using the
    interior estimates we obtain

    \begin{eqnarray*}
        |\vec{u}(x)| & \leq&  \frac{C}{|B(x,cr)|} \int_{B(x,cr)}
        |\vec{u}|\,dy \\
        &\leq& C\left( \frac{1}{r^{d-1}} \int_{\Delta(0,2r)}
        |(\vec{u})^*|^2\,d\sigma \right)^{1/2},
    \end{eqnarray*}

    \noindent for $x\in \Gamma(Q)$ such that $|x-Q|\geq cr$.  This
    implies that

    \begin{equation}
        \left(\frac{1}{r^{d-1}} \int_{I_r}
        |\mathcal{M}_2(\vec{u})|^p\,d\sigma \right)^{1/p} \leq C
        \left( \frac{1}{r^{d-1}} \int_{\Delta(0,2r)}
        |(\vec{u})^*|^2\,d\sigma \right)^{1/2}.
    \end{equation}

    Next we estimate $\mathcal{M}_1(\vec{u})$ on $I_r$ which is much
    more involved.  Applying Lemma \ref{importantLemma} to $\vec{u}$
    on the Lipschitz domain $Z_{\rho r}$ for $\rho \in
    (\frac{3}{2},2)$ we obtain

    \begin{eqnarray}
        \lefteqn{\frac{1}{r^{d-1}} \int_{I_r}
        |\mathcal{M}_1(\vec{u})|^p\,d\sigma \leq \frac{1}{r^{d-1}}
        \int_{\partial Z_{\rho r}} |(\vec{u})^*_{\rho}|^p\,d\sigma
        }\nonumber \\
        &\leq& C\left(\frac{1}{r^{d-1}} \int_{\partial Z_{\rho r}}
        |(\vec{u})^*_{\rho}|^2\,d\sigma \right)^{p/2} \label{m1Est}\\ && \qquad+
        C_{\gamma}r^{\gamma} \sup_{x\in Z_{\rho r}} |\nabla^2
        \vec{u}(x)|^{p-2} [\delta_{\rho}(x)]^{2p-2-\gamma}
        \frac{1}{r^{d-1}} \int_{\partial Z_{\rho r}} |(\nabla
        \vec{u})^*_{\rho}|^2\,d\sigma, \nonumber
    \end{eqnarray}

    \noindent where $\delta_{\rho}(x)=\mbox{dist}(x,\partial
    Z_{\rho r})$ and $(\nabla \vec{u})^*_{\rho}$ is the
    non-tangential maximal function of $\nabla \vec{u}$ with respect
    to the domain $Z_{\rho r}$.  Now, using the $L^2$ regularity
    estimate established by Fabes, Kenig, and Verchota in  \cite{fabes}
    and the fact that $\vec{u}=0$ on $\Delta(0,3r)$ we obtain

    \begin{eqnarray}
        \int_{\partial Z_{\rho r}} |(\nabla
        \vec{u})^*_{\rho}|^2\,d\sigma &\leq& C\int_{\partial Z_{\rho
        r}} |\nabla_t \vec{u}|^2\,d\sigma \nonumber \\
        &\leq& C\int_{\Omega\cap\partial Z_{\rho r}} |\nabla_t
        \vec{u}|^2\,d\sigma \leq C \int_{\Omega\cap\partial Z_{\rho r}} |\nabla
        \vec{u}|^2\,d\sigma. \label{nablaVEst}
    \end{eqnarray}

    Note that when $x\in Z_{\rho r}$ we have $\delta_{\rho}(x)\leq
    \delta(x) \leq Cr$.  Also note that the condition $$ p<2 +
    \frac{4}{d-\lambda}$$ implies that $$\frac{\lambda-d}{2}(p-2)
    +2>0.$$  Thus, we may choose $\gamma>0$ so small that
    $$\frac{\lambda-d}{2}(p-2) +2 -\gamma>0.$$  Thus, using estimate
    (\ref{pointwiseSecondEstimate})
    we have that

    \begin{eqnarray}
        \lefteqn{ r^{\gamma}\sup_{x\in Z_{\rho r}} |\nabla^2 \vec{u}(x)|^{p-2}
        [\delta_{\rho}(x)]^{2p-2-\gamma}} \nonumber \\  \hspace{0.2in} &\leq& C r^{\gamma}
        \sup_{x\in Z_{\rho r}}\frac{[\delta_{\rho}(x)]^{2p-2-\gamma}}{[\delta_{\rho}(x)]^{2(p-2)}}
        \left(\frac{\delta_{\rho}(x)}{r}\right)^{\frac{\lambda-d}{2}(p-2)}
        \left(\frac{1}{r^{d-1}} \int_{\Delta(0,2r)}
        |(\vec{u})^*|^2\,d\sigma\right)^{\frac{p-2}{2}}\nonumber \\
        &\leq& C r^{\gamma} \sup_{x\in Z_{\rho r}}
        [\delta_{\rho}(x)]^{2-\gamma} \left(\frac{1}{r^{d-1}}
        \int_{\Delta(0,2r)} |(\vec{u})^*|^2\,d\sigma
        \right)^{\frac{p-2}{2}}  \label{supTermEst}\\
        &\leq& Cr^2 \left(\frac{1}{r^{d-1}} \int_{\Delta(0,2r)}
        |(\vec{u})^*|^2\,d\sigma \right)^{\frac{p-2}{2}}. \nonumber
    \end{eqnarray}

    \noindent Combining estimates (\ref{m1Est}),(\ref{nablaVEst}), and (\ref{supTermEst}) we obtain

    \begin{eqnarray*}
        \lefteqn{\left(\frac{1}{r^{d-1}} \int_{I_r}
        |\mathcal{M}_1(\vec{u})|^p\,d\sigma \right)^{2/p}}\\ &\leq&
        \frac{C}{r^{d-1}} \int_{\Omega \cap \partial Z_{\rho r}}
        |\vec{u}|^2\,d\sigma \\ && \quad+ C \left(\frac{1}{r^{d-1}}
        \int_{\Delta(0,2r)} |(\vec{u})^*|^2\,d\sigma
        \right)^{\frac{p-2}{p}}\left(\frac{1}{r^{d-3}}\int_{\Omega\cap\partial
        Z_{\rho r}} |\nabla \vec{u}|^2\,d\sigma
        \right)^{2/p}.
    \end{eqnarray*}

    Now, using Young's inequality we get that

    \begin{eqnarray*}
        \left(\frac{1}{r^{d-1}} \int_{I_r}
        |\mathcal{M}_1(\vec{u})|^p\,d\sigma \right)^{2/p} &\leq&
        \frac{C}{r^{d-1}} \int_{\Omega \cap \partial Z_{\rho r}}
        | \vec{u}|^2\,d\sigma + \frac{C}{r^{d-1}} \int_{\Delta(0,2r)}
        |(\vec{u})^*|^2\,d\sigma  \\ && \qquad + \frac{C}{r^{d-3}}
        \int_{\Omega\cap\partial Z_{\rho r}} |\nabla
        \vec{u}|^2\,d\sigma.
    \end{eqnarray*}

    Integrating the above inequality in $\rho \in (\frac{3}{2},2)$
    and using the Caccioppoli inequality we obtain

    \begin{eqnarray*}
        \left(\frac{1}{r^{d-1}} \int_{I_r}
        |\mathcal{M}_1(\vec{u})|^p\,d\sigma \right)^{2/p} &\leq&
        \frac{C}{r^{d-1}} \int_{\Delta(0,2r)}
        |(\vec{u})^*|^2\,d\sigma + \frac{C}{r^d} \int_{Z_{2r}}
        |\vec{u}|^2\,dx  \\ && \qquad\qquad+ \frac{C}{r^{d-2}} \int_{Z_{2r}}
        |\nabla\vec{u}|^2\,dx \\
        &\leq&  \frac{C}{r^{d-1}} \int_{\Delta(0,2r)}
        |(\vec{u})^*|^2\,d\sigma + \frac{C}{r^d} \int_{Z_{4r}}
        |\vec{u}|^2\,dx \\
        &\leq& \frac{C}{r^{d-1}} \int_{\Delta(0,4r)}
        |(\vec{u})^*|^2\,d\sigma.
    \end{eqnarray*}

    \noindent Thus, a simple covering argument gives that
    $$\left(\frac{1}{r^{d-1}} \int_{\Delta(0,r)}
    |(\vec{u})^*|^p\,d\sigma \right)^{1/p} \leq C
    \left(\frac{1}{r^{d-1}} \int_{\Delta(0,4r)}
    |(\vec{u})^*|^2\,d\sigma \right)^{1/2}.$$  Thus, by Theorem
    \ref{necessarySufficientCondition} this implies the solvability
    of the $L^p$ Dirichlet problem on $\Omega$ for
    $$2<p<2+\frac{4}{d-\lambda}.$$

\end{proof}

\section{Solvability of the $L^p$ Dirichlet Problem}

    We conclude with the proof of Corollary \ref{dirichletProblem}.
    To prove the corollary we show that condition
    (\ref{simplerConditionEstimate}) is satisfied for some
    $\lambda>3$.

    \begin{proof}(of Corollary \ref{dirichletProblem})
        Let $\vec{u}$ be a solution of the Stokes system
        (\ref{stokesSystem}) with the properties $(\vec{u})^*\in
        L^2(\partial\Omega)$ and $\vec{u}=0$ on $\Delta(Q_0,R)$.
        By rotation and translation we may assume that $Q_0=0$ and
        use the notation in Theorem \ref{simplerConditionThm}.  Let
        $0<r<R/8$.  Note that

        \begin{eqnarray*}
            |\vec{u}(x)| &=&  |\vec{u}(x)-\vec{u}(Q)| \\
            &\leq& \int_0^{c\delta(x)} |\nabla \vec{u}|\,dx_d \\
            &\leq& C \delta(x) |(\nabla \vec{u})_{\rho}^*| \\
            &\leq& C r |(\nabla \vec{u})_{\rho}^*(Q)|,
        \end{eqnarray*}

        \noindent where $x=(Q,x_d)\in Z_r$ and $(\nabla \vec{u})_{\rho}^*$ is the
        non-tangential maximal function of $\nabla \vec{u}$ with respect to the
        Lipschitz domain $Z_{\rho R}$ for $\rho\in (\frac{1}{8},\frac{1}{4})$.  Thus,

        \begin{eqnarray*}
            \int_{Z_r} |\vec{u}|^2\,dx &\leq&
            C\int_{I_r}\int_{0}^{cr} r^2|(\nabla
            \vec{u})_{\rho}^*|^2\,dx_d d\sigma \\
            &\leq& Cr^3 \int_{I_r} |(\nabla \vec{u})_{\rho}^*|^2\,d\sigma
            \\
            &\leq& Cr^{3+(d-1)(1-\frac{2}{q})} \left(\int_{I_{\rho
            R}} |(\nabla \vec{u})_{\rho}^*|^q\,d\sigma
            \right)^{2/q},
        \end{eqnarray*}

        \noindent where $\rho \in (\frac{1}{8},\frac{1}{4})$ and $q>2$.  Now,
        we choose $q>2$ so that the regularity estimate holds uniformly
        on the Lipschitz domain $Z_{\rho R}$ for $\rho \in
        (\frac{1}{8},\frac{1}{4})$.  Then,

        \begin{eqnarray*}
            \left(\int_{Z_r} |\vec{u}|^2\,dx \right)^{q/2} &\leq& C
            r^{\frac{3q}{2}+(d-1)(\frac{q}{2}-1)} \int_{\partial
            Z_{\rho R}} |(\nabla \vec{u})_{\rho}^*|^q\,d\sigma \\
            &\leq& C r^{\frac{3q}{2}+(d-1)(\frac{q}{2}-1)}
            \int_{\partial Z_{\rho R}} |\nabla_t \vec{u}|^q\,d\sigma
            \\
            &\leq& C r^{\frac{3q}{2}+(d-1)(\frac{q}{2}-1)}
            \int_{\Omega\cap\partial Z_{\rho R}} |\nabla
            \vec{u}|^q\,d\sigma.
        \end{eqnarray*}

        \noindent Next, we integrate both sides of the above inequality in
        $\rho \in (\frac{1}{8},\frac{1}{4})$ to obtain

        \begin{equation}
            \left(\int_{Z_r} |\vec{u}|^2\,dx \right)^{q/2} \leq C r^{\frac{3q}{2}+(d-1)(\frac{q}{2}-1)}
            \frac{1}{R} \int_{Z_{\frac{R}{4}}} |\nabla \vec{u}|^q\,dx.
        \end{equation}

        \noindent Then, using Lemma \ref{higherInt} on the higher
        integrability and Caccioppoli's inequality we obtain

        \begin{eqnarray*}
            \int_{Z_r} |\vec{u}|^2\,dx &\leq& C
            r^{3+(d-1)(1-\frac{2}{q})} \frac{R^{2d/q}}{R^{2/q}}
            \left(\frac{1}{R^d} \int_{Z_{\frac{R}{4}}} |\nabla \vec{u}|^q\,dx
            \right)^{2/q} \\
            &\leq& C r^{3+(d-1)(1-\frac{2}{q})}
            R^{(d-1)(\frac{2}{q})} \frac{1}{R^{d+2}} \int_{Z_{R}}
            |\vec{u}|^2\,dx \\
            &\leq& C \left(
            \frac{r}{R}\right)^{3+(d-1)(1-\frac{2}{q})}
            \int_{Z_{R}} |\vec{u}|^2\,dx.
        \end{eqnarray*}

        \noindent Thus, condition (\ref{simplerConditionEstimate}) holds for
        $\lambda=3+(d-1)(1-\frac{2}{q})>3$.  Note that

        \begin{eqnarray*}
            2+\frac{4}{d-\lambda} &=& 2 +
            \frac{4}{d-3-(d-1)(1-\frac{2}{q})} \\
            &\geq& 2 + \frac{4}{d-3}= \frac{2(d-1)}{d-3}.
        \end{eqnarray*}

        \noindent Thus, the $L^p$ Dirichlet problem is solvable for
        $$2-\varepsilon < p <\frac{2(d-1)}{d-3} + \varepsilon. $$
    \end{proof}

\bibliography{stokesSystem}

\small
\noindent\textsc{Department of Mathematics,
University of Kentucky, Lexington, KY 40506}\\
\emph{E-mail address}: \texttt{jkilty@ms.uky.edu} \\

\end{document}